\newtheorem{theorem}{Theorem}[section]
\newtheorem{thm}{Theorem}
\newtheorem{lemma}[theorem]{Lemma}
\newtheorem{lem}[theorem]{Lemma}
\theoremstyle{remark}
\numberwithin{equation}{section}
\DeclareMathOperator{\Irr}{Irr}     \DeclareMathOperator{\IBr}{IBr}
\begin{document}

\title[Taketa's theorem]{Isaacs' Generalization of Taketa's theorem}

\author[X. Chen]{Xiaoyou Chen}
\address{School of Mathematics and Statistics, Henan University of Technology, Zhengzhou 450001, China}
\email{cxymathematics@hotmail.com}

\author[M. L. Lewis]{Mark L. Lewis}
\address{Department of Mathematical Sciences, Kent State University, Kent, OH 44242, USA}
\email{lewis@math.kent.edu}

\subjclass[2020]{Primary 20C20; Secondary 20C15}

\date{\today}

\keywords{$M$-group; $M_{p}$-group; pseudo monomial characters}

\begin{abstract}
We generalize the definition of pseudo monomial characters and $M$-groups to the Brauer character and Isaacs' $\pi$-partial character settings.  We prove an analogs of Isaacs's generalization of Taketa's theorem in those settings.   We consider other analogs of results regarding $M$-groups in those settings.
\end{abstract}
\maketitle

\section{Introduction}

In this paper, all groups are finite.  The reader is referred to \cite{Isaacs1976} and \cite{Navarro1998} for notation.  Let $G$ be a group, and write ${\rm Irr} (G)$ for the set of irreducible (complex) characters of $G$.  Recall that a character $\chi$ of $G$ is {\it monomial} if $\chi$ is induced from a linear character of some subgroup of $G$.  If every $\chi\in {\rm Irr} (G)$ is monomial, then $G$ is said to be an {\it $M$-group}.  A well-known theorem of Taketa (Corollary 5.13 of \cite{Isaacs1976}) states that an $M$-group is necessarily solvable.

This paper is motivated by Theorem A of  \cite{Isaacs19842} where Isaacs generalizes Taketa's result to classes of groups.  Take $G$ to be a group and take $\chi$ to be an irreducible character of $G$.  Define the set of degrees:
$$D (\chi) = \{ \psi (1) \mid \psi \in {\rm Irr} (H) ~\text{for some}~H \leq G~\text{and}~\psi^{G} = \chi \}.$$
Following \cite{Gunter}, when ${\rm gcd} (D(\chi)) = 1$ for each character $\chi \in {\rm Irr}(G)$, we say $G$ is {\it a pseudo-$M$-group}.  We can see that pseudo-$M$-groups are generalizations of $M$-groups.  In Theorem C of \cite{Isaacs19842}, Isaacs uses his generalization of Taketa's theorem to prove that pseudo-$M$-groups are solvable.  In her dissertation \cite{Gunter}, Gunter gave this definition a name: pseudo-M-groups.

In this paper, we will prove an analog of Isaacs' generalization in the context of Brauer characters.  We also will  give an analog of Gunther's definition in the Brauer context and use the generalization of Taketa's result to prove solvability in this context.

Let $p$ be a prime.  
Denote by ${\rm IBr}(G)$ the set of irreducible ($p$-)Brauer characters of $G$.  A $p$-Brauer character of $G$ is monomial if it is induced from a linear $p$-Brauer character of some subgroup (not necessarily proper) of $G$.  This definition was introduced by Okuyama in \cite{Okuyama} using module theory.  A group $G$ is called an {\it $M_{p}$-group} if every irreducible $p$-Brauer character of $G$ is monomial.  In particular, Okuyama proves in \cite{Okuyama} that $M_{p}$-groups  are solvable.  Okuyama also remarked in \cite{Okuyama} that an $M$-group are necessarily an $M_{p}$-group for every prime $p$.  However an $M_{p}$-group need not be an $M$-group; for example, ${\rm SL}(2, 3)$ is an $M_{2}$-group, not an $M$-group.   In fact, a group may be an $M_p$-group for every prime $p$ and not be an $M$-group.  In \cite{moreto}, an example of a $\{p,q\}$-groups is given that is both an $M_p$- and and $M_q$-group, but is not an $M$-group.

We generalize Okuyama's theorem on the solvability of $M_{p}$-groups in this note by using classes of groups along the lines of Isaacs' Theorem A in \cite{Isaacs19842}.  Note that we are using the definition of the kernel of Brauer character found on page 39 of \cite{Navarro1998}.

\begin{thm}\label{theorem1}
Let $\mathcal{F}$ be a family of groups that is closed under isomorphism, subgroups and extensions, let $G$ be a group, and let $p$ be a prime. Assume that ${\bf O}_{p} (G) = 1$.  If for every Brauer character $\varphi \in {\rm IBr} (G)$, there exists a subgroup $H \leq G$ and a Brauer character $\psi \in {\rm IBr} (H)$ so that $\psi^{G} = \chi$ and $H/\ker\psi \in \mathcal{F}$, then $G \in \mathcal{F}$.
\end{thm}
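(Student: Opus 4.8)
The plan is to induct on $|G|$, using closure of $\mathcal{F}$ under extensions to assemble $G$ from a minimal normal subgroup $N$ and the quotient $G/N$. Two standard facts drive everything, and I would isolate them as preliminary lemmas. First, if $\psi\in\IBr(H)$ satisfies $\psi^{G}=\varphi\in\IBr(G)$ then the module affording $\varphi$ is induced from the one affording $\psi$, whence $\ker\varphi=\bigcap_{g\in G}(\ker\psi)^{g}$; in particular $\ker\varphi\le\ker\psi$ and $\ker\varphi$ is the largest normal subgroup of $G$ contained in $\ker\psi$. Second, $\bigcap_{\varphi\in\IBr(G)}\ker\varphi=\mathbf{O}_{p}(G)$, so the hypothesis $\mathbf{O}_{p}(G)=1$ says precisely that the irreducible Brauer characters of $G$ are collectively faithful.

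The first half of the induction is to transfer the hypothesis to quotients. Given $N\trianglelefteq G$ and $\varphi\in\IBr(G/N)$, viewed as a character of $G$ with $N\le\ker\varphi$, choose the data $(H,\psi)$ for $\varphi$. Since $N\le\ker\varphi\le\ker\psi\le H$ and $N\trianglelefteq G$, the character $\psi$ deflates to $\bar\psi\in\IBr(H/N)$ with $\bar\psi^{\,G/N}=\varphi$ and $(H/N)/\ker\bar\psi\cong H/\ker\psi\in\mathcal{F}$, so $G/N$ inherits the hypothesis. The complication absent from the ordinary-character version is that $\IBr$ cannot detect a normal $p$-subgroup, so $\mathbf{O}_{p}(G/N)$ may be nontrivial and the inductive hypothesis does not apply to $G/N$ directly. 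To repair this I would pass further: put $P/N=\mathbf{O}_{p}(G/N)$, note $\mathbf{O}_{p}(G/P)=\mathbf{O}_{p}\bigl((G/N)/\mathbf{O}_{p}(G/N)\bigr)=1$, observe that the same deflation argument gives the hypothesis for $G/P$, and conclude $G/P\in\mathcal{F}$ by induction when $P\ne1$.

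The second half is to show that a minimal normal subgroup $N$ lies in $\mathcal{F}$. Because $\mathbf{O}_{p}(G)=1$, $N$ is not a $p$-group, so $N$ is either elementary abelian of exponent some prime $q\ne p$ or a direct power $S^{k}$ of a nonabelian simple group. Since $\bigcap_{\varphi}\ker\varphi=1$ there is $\varphi\in\IBr(G)$ with $N\not\le\ker\varphi$; minimality of $N$ forces $N\cap\ker\varphi=1$, and the core formula then gives $N\not\le\ker\psi$ for the associated $\psi$. In the principal case $N\le H$ the quotient $N/(N\cap\ker\psi)$ is a nontrivial section of the $\mathcal{F}$-group $H/\ker\psi$, hence a nontrivial characteristically simple group in $\mathcal{F}$, and since $\mathcal{F}$ is closed under subgroups and extensions and $N$ is a direct power of a group already visible in that section, this forces $N\in\mathcal{F}$. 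When $N\not\le H$ I would run the same computation on the faithful character $\varphi_{N}$ through a Mackey decomposition over $N\backslash G/H$, reading off the conclusion from the sections $(N\cap H^{g})/\bigl(N\cap(\ker\psi)^{g}\bigr)\in\mathcal{F}$. Extension closure applied to $N\in\mathcal{F}$ together with the quotient results then yields $G\in\mathcal{F}$.

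The step I expect to be the main obstacle is exactly the one new to the modular setting: controlling the normal $p$-section $P/N=\mathbf{O}_{p}(G/N)$ produced upon quotienting. Brauer characters are blind to it, so neither the core formula nor the detecting-character argument constrains it, and to close the induction one must know that this $p$-group, and hence $G/N$ itself, lands in $\mathcal{F}$. I expect this to be where $\mathbf{O}_{p}(G)=1$ does its real work, in tandem with the closure properties of $\mathcal{F}$, and it is the precise point at which the argument diverges from Isaacs' original ordinary-character proof.
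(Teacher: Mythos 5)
You take a genuinely different route from the paper, and it contains two real gaps, one of which you name but do not close. The paper's proof never inducts on $|G|$ and never forms a quotient group: it sets $N = G^{\mathcal{F}}$ (the $\mathcal{F}$-residual, which satisfies $N^{\mathcal{F}} = N$), chooses $\varphi \in \IBr(G)$ of minimal degree among those with $N \not\le \ker\varphi$ (possible since $\bigcap_{\varphi \in \IBr(G)} \ker\varphi = {\bf O}_{p}(G) = 1$), writes $\varphi = \psi^{G}$ with $H/\ker\psi \in \mathcal{F}$, uses the constituents of $(1_{H^o})^{G}$ and the minimality of $\varphi(1)$ to force $N \le H$, and then finishes by pure residual formalism: $H/N \in \mathcal{F}$ gives $H^{\mathcal{F}} \le N$, while $N = N^{\mathcal{F}} \le H^{\mathcal{F}}$, so $N = H^{\mathcal{F}} \le \ker\psi$ and hence $N \le \ker(\psi^{G}) = \ker\varphi$, a contradiction. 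Because that argument stays inside $G$, the question of ${\bf O}_{p}(G/N)$ never arises. Your induction must face it, and your write-up concedes the point: you obtain $N \in \mathcal{F}$ and $G/P \in \mathcal{F}$, but nothing places the $p$-group $P/N = {\bf O}_{p}(G/N)$ in $\mathcal{F}$, and ``I expect this to be where ${\bf O}_{p}(G)=1$ does its real work'' is a conjecture, not a step. There is also a second, independent gap: in the case $N \not\le H$ of your minimal-normal-subgroup argument, the Mackey sections $(N \cap H^{g})/(N \cap (\ker\psi)^{g})$ can be trivial for every $g$ (for instance when $N \cap H^{g} = 1$), and then nothing about $N$ can be ``read off.''

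You should know, however, that the obstacle you flagged is not an artifact of your approach: it cannot be overcome using only the stated closure properties of $\mathcal{F}$. Take $G = S_3$, $p = 2$, and let $\mathcal{F}$ be the class of $3$-groups, which is closed under isomorphism, subgroups and extensions. Then ${\bf O}_{2}(G) = 1$, and $\IBr(G)$ has exactly two members: the principal Brauer character, which can only be induced from itself, with $G/\ker 1_{G^o} = 1 \in \mathcal{F}$; and the degree-two Brauer character of the natural $\GL(2,2)$-module, which equals $\lambda^{G}$ for a faithful linear $\lambda \in \IBr(A_3)$, so that $A_3/\ker\lambda \cong C_3 \in \mathcal{F}$. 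All hypotheses of the theorem hold, yet $S_3$ is not a $3$-group. (In Isaacs' ordinary-character theorem the sign character blocks this example; modulo $2$ it becomes principal and the obstruction evaporates.) The same phenomenon --- composition factors of a mod-$p$ permutation module do not detect the core of the inducing subgroup --- is exactly what breaks the paper's own step $N \le \ker((1_{H^o})^{G}) \le H$: in this example $(1_{A_3^o})^{S_3} = 2 \cdot 1_{G^o}$, all of whose constituents have kernel $S_3 \not\le A_3$. So your diagnosis of where the modular setting departs from Isaacs' ordinary-character proof is exactly right; the correct conclusion is that the statement in this generality requires a stronger hypothesis on $\mathcal{F}$ (for example, that $\mathcal{F}$ contain all $p$-groups, as it does in the motivating case $\mathcal{F} = \{\text{solvable groups}\}$), not that your induction is missing a clever trick.
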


Let $\mathcal{F} = \{\text{solvable groups}\}$.  We have from Theorem \ref{theorem1} that if $G$ is an $M_{p}$-group, then $G$ is solvable.  Motivated by Gunter in \cite{Gunter}, we define pseudo $M_p$-groups. For a Brauer character $\varphi \in {\rm IBr}(G)$, write
$$S (\varphi) = \{ \psi \in {\rm IBr} (H)\mid H \leq G, \psi^{G} = \varphi \}.$$
When ${\rm gcd} \{ \psi(1) \mid \psi \in S(\varphi) \} = 1$, we define $\varphi$ to be {\it pseudo monomial}.  We say $G$ is a {pseudo $M_p$-group} if every Brauer character in ${\rm IBr} (G)$ is pseudo-monomial.  With this definition, we have another generalization about the solvability of $M_{p}$-groups.  As we said above, this next theorem is motivated by Theorem C of \cite{Isaacs19842}.

\begin{thm} \label{pseudo M_p}
Let $G$ be a group and let $p$ be a prime.  Assume ${\bf O}_{p} (G) = 1$.
If $G$ is a pseudo $M_p$-group, then $G$ is solvable.
\end{thm}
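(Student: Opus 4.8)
The plan is to deduce the result from Theorem~\ref{theorem1}, taking $\mathcal{F}$ to be the class of finite solvable groups. This class is closed under isomorphism, subgroups and extensions, so Theorem~\ref{theorem1} applies to it; since ${\bf O}_{p}(G)=1$ by hypothesis, it therefore suffices to verify that $G$ meets the inducing condition of that theorem, namely that for \emph{every} $\varphi\in\IBr(G)$ there is a single subgroup $H\le G$ and a single $\psi\in\IBr(H)$ with $\psi^{G}=\varphi$ and $H/\ker\psi$ solvable. I will call such a $\varphi$ \emph{solvably monomial} and establish that every $\varphi\in\IBr(G)$ has this property, arguing by induction on $|G|$ (the case $|G|=1$ being trivial).

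First I would record two elementary facts. Since ${\bf O}_{p}(H)$ acts trivially on every irreducible module in characteristic $p$, we have ${\bf O}_{p}(H)\le\ker\psi$ for each $\psi\in\IBr(H)$; hence any faithful section $\overline{H}=H/\ker\psi$ satisfies ${\bf O}_{p}(\overline{H})=1$. Secondly, a linear Brauer character is a homomorphism into the multiplicative group of the field, so if $\psi\in\IBr(H)$ has $\psi(1)=1$ then $H/\ker\psi$ is abelian, and in particular solvable.

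The main step is a dichotomy obtained from the pseudo $M_{p}$ hypothesis. Fix $\varphi\in\IBr(G)$; note $S(\varphi)\neq\varnothing$ since $\varphi\in S(\varphi)$. Among all sources $\psi\in S(\varphi)$, with $\psi\in\IBr(H)$ and $\psi^{G}=\varphi$, choose one for which the order of the faithful section $\overline{H}=H/\ker\psi$ is as small as possible. If this minimum equals $|G|$, then every source satisfies $|H/\ker\psi|=|G|$, forcing $H=G$ and $\ker\psi=1$, so that $\psi=\psi^{G}=\varphi$; thus $S(\varphi)=\{\varphi\}$, and since $G$ is a pseudo $M_{p}$-group we get $\gcd\{\psi(1):\psi\in S(\varphi)\}=\varphi(1)=1$. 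Hence $\varphi$ is linear, $G/\ker\varphi$ is abelian, and $\varphi$ is solvably monomial via itself. Otherwise $|\overline{H}|<|G|$, while ${\bf O}_{p}(\overline{H})=1$ by the first remark, so the inductive hypothesis would give that $\overline{H}$ is solvable, which is exactly the assertion that $\varphi$ is solvably monomial. In either case $\varphi$ is solvably monomial, and feeding this into Theorem~\ref{theorem1} yields that $G$ is solvable.

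The hard part will be justifying the inductive step just invoked: to apply induction to the minimal faithful section $\overline{H}=H/\ker\psi$ I must know that $\overline{H}$ is again a pseudo $M_{p}$-group. This is the delicate point, because the pseudo $M_{p}$ property does \emph{not} transfer to arbitrary sections: upon passing to a quotient the set of available inducing degrees can only shrink, so its gcd can only increase, and, dually, the sources of a Brauer character of $\overline{H}$ need not appear among the sources of any character of $G$. I therefore expect the genuine content of the proof to be an inheritance lemma, showing that the minimal faithful sections arising from the sources of pseudo-monomial characters are themselves pseudo $M_{p}$-groups (or at least are solvably monomial), proved by analyzing how induction from $H$ up to $G$ interacts with the minimality of $|H/\ker\psi|$. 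Once that lemma is secured, the induction closes and Theorem~\ref{theorem1} delivers the solvability of $G$.
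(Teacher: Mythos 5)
Your proposal is not a proof: it has a gap that you yourself identify but do not close, and the gap is essential rather than technical. Your plan is to feed the pseudo-$M_p$ hypothesis into Theorem~\ref{theorem1} by showing every $\varphi \in \IBr(G)$ has a \emph{single} source $\psi \in \IBr(H)$ with $H/\ker\psi$ solvable, arguing by induction on $|G|$ applied to a minimal faithful section $\overline{H} = H/\ker\psi$. But the inductive hypothesis applies only to pseudo-$M_p$-groups (with trivial $p$-core) of smaller order, and, as you concede, there is no reason $\overline{H}$ is a pseudo-$M_p$-group: the hypothesis on $G$ is a statement about the degrees of characters of subgroups of $G$ that induce irreducibly \emph{to $G$}, and it says nothing about characters of subgroups of $H$ inducing irreducibly to $H$, let alone about the section $\overline{H}$. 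The gcd condition is purely numerical and does not transfer down. So the "inheritance lemma" you defer is the entire theorem, and no amount of minimality in the choice of $|H/\ker\psi|$ supplies it; indeed, if such inheritance held, pseudo-$M_p$-groups would essentially reduce to $M_p$-type data, which is exactly what one cannot assume.

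The paper's proof avoids any reduction to Theorem~\ref{theorem1} and instead exploits the gcd condition numerically, following Isaacs' proof of his Theorem C. Let $N$ be the solvable residual of $G$; assuming $N > 1$, choose $\varphi \in \IBr(G)$ of minimal degree with $N \nsubseteq \ker\varphi$ (possible since $\bigcap_{\varphi} \ker\varphi = {\bf O}_p(G) = 1$). The case $S(\varphi) = \{\varphi\}$ forces $\varphi$ linear, hence $G/\ker\varphi$ abelian and $N \leq \ker\varphi$, a contradiction. Otherwise, for any proper source $H < G$, the minimality of $\varphi(1)$ applied to the constituents of $(1_{H^o})^G$ gives $N \leq \ker\bigl((1_{H^o})^G\bigr) \leq H$. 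Now take a constituent $\alpha$ of $\varphi_N$; since $N = N'$ is perfect and $\alpha \neq 1_{N^o}$, we get $\alpha(1) > 1$, so the hypothesis $\gcd\{\psi(1) : \psi \in S(\varphi)\} = 1$ permits choosing a source $\psi \in \IBr(H)$ with $\alpha(1) \nmid \psi(1)$. But $N \leq H$ and Clifford theory force every constituent of $\psi_N$ to be $G$-conjugate to $\alpha$, whence $\alpha(1) \mid \psi(1)$ --- a contradiction. Note how the perfectness of the solvable residual and Clifford's theorem convert the gcd hypothesis directly into a divisibility contradiction; this is the mechanism your approach lacks, and it is why the paper proves this theorem separately rather than as a corollary of Theorem~\ref{theorem1}.
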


%

%
In Gunter's dissertation \cite{Gunter}, she further explores pseudo-$M$-groups.  In particular, she proves in a number of situations that pseudo $M$-groups will actually be $M$-groups.  For the rest of this paper, we will further explore pseudo $M$-groups and pseudo $M_p$-groups.  We next prove that Okuyama's observation about $M$-groups has an analog to pseudo $M$-groups.

\begin{thm}\label{thm01}
If $G$ is a pseudo-$M$-group, then $G$ is a pseudo-$M_{p}$-group for every prime $p$.
\end{thm}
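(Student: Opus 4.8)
The plan is to combine three ingredients: that a pseudo-$M$-group is solvable (hence $p$-solvable), the Fong--Swan theorem, and the fact that induction commutes with restriction to the $p$-regular classes. Throughout, for an ordinary character $\theta$ I would write $\theta^{0}$ for its restriction to the $p$-regular classes, a (generally reducible) Brauer character of the same degree $\theta(1)$.

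First I would fix a prime $p$ and a character $\varphi \in \IBr(G)$, reducing the problem to producing, for this $\varphi$, a family of members of $S(\varphi)$ whose degrees have $\gcd$ equal to $1$. Since $G$ is a pseudo-$M$-group it is solvable by Theorem~C of \cite{Isaacs19842}, and in particular $p$-solvable. Hence the Fong--Swan theorem (see \cite{Navarro1998}) applies and yields $\chi \in \Irr(G)$ with $\chi^{0} = \varphi$; note this forces $\chi(1) = \varphi(1)$, which is exactly the degree-matching that the pseudo-$M$ hypothesis on $\chi$ will transfer to $\varphi$.

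Next I would invoke the pseudo-$M$ hypothesis applied to $\chi$: since $\gcd(D(\chi)) = 1$, there are subgroups $H_{i} \le G$ and characters $\psi_{i} \in \Irr(H_{i})$ with $\psi_{i}^{G} = \chi$ and $\gcd_{i}\psi_{i}(1) = 1$. For each $i$, restricting to $p$-regular classes and using that induction commutes with this restriction gives $(\psi_{i}^{0})^{G} = (\psi_{i}^{G})^{0} = \chi^{0} = \varphi$. The key observation is that the induced Brauer character $(\psi_{i}^{0})^{G}$ is exactly the irreducible $\varphi$, not merely a character having $\varphi$ as a constituent.

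The crux --- and the step I expect to require the most care --- is to extract from this an honest irreducible Brauer character of $H_{i}$ that induces \emph{irreducibly} to $\varphi$. Decomposing $\psi_{i}^{0} = \sum_{\beta} d_{\psi_{i}\beta}\,\beta$ over $\beta \in \IBr(H_{i})$ and inducing, one obtains $\varphi = \sum_{\beta} d_{\psi_{i}\beta}\,\beta^{G}$, a sum of genuine Brauer characters with nonnegative integer coefficients. Comparing the multiplicity of each irreducible constituent of $G$ on both sides, positivity forces a single $\beta_{i} \in \IBr(H_{i})$ with $d_{\psi_{i}\beta_{i}} = 1$ and $\beta_{i}^{G} = \varphi$ exactly, every other contribution vanishing. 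Thus $\beta_{i} \in S(\varphi)$, and comparing degrees in $\beta_{i}^{G} = \varphi$ against $\psi_{i}^{G} = \chi$ together with $\chi(1) = \varphi(1)$ gives $\beta_{i}(1) = \psi_{i}(1)$. Consequently $\{\psi_{i}(1)\} \subseteq \{\psi(1) : \psi \in S(\varphi)\}$, so the $\gcd$ of the degrees in $S(\varphi)$ divides $\gcd_{i}\psi_{i}(1) = 1$, whence $\varphi$ is pseudo monomial. Since $p$ and $\varphi$ were arbitrary, $G$ is a pseudo $M_{p}$-group for every prime $p$.
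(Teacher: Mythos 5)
Your proof is correct and follows essentially the same route as the paper's: the paper proves the more general $\pi$-partial character statement (lift to an ordinary character using solvability of pseudo-$M$-groups, apply the pseudo-$M$ hypothesis to the ordinary lift, then restrict the inducing characters back to get members of $S(\varphi)$ of the same degrees), which at $\pi = \{p\}'$ specializes to exactly your Fong--Swan argument. Your positivity/decomposition step merely makes explicit what the paper asserts directly, namely that each $\psi_i^{0}$ is irreducible and induces $\varphi$.
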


One important fact about $M$-groups is that monomiality is not inherited by normal subgroups (see \cite{dade} and \cite{vand}); however, in view of a theorem of Dornoff (see Problem 6.9 of \cite{Isaacs1976})) a normal Hall subgroup of an $M$-group is still an $M$-group.  The next theorem shows that there is an analog of Dornhoff's theorem for pseudo-$M_p$-groups.

\begin{thm}\label{thm02}
The following are true:
\begin{enumerate}
\item If $G$ is a pseudo-$M$-group and $N$ is a normal Hall subgroup of $G$, then $N$ is a pseudo-$M$-group.
\item If $G$ is a pseudo-$M_{p}$-group and $N$ is a normal Hall subgroup of $G$, then $N$ is also a pseudo-$M_{p}$-group.
\end{enumerate}
\end{thm}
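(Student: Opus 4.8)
The plan is to prove both parts at once, since the Brauer-character argument for part (2) runs parallel to the ordinary one for part (1) once the corresponding coprime Clifford theory is in place; throughout, $N$ being a normal Hall subgroup means $\gcd(|N|,|G:N|)=1$. For part (1) I must show that $\gcd\{\psi(1): \psi\in\Irr(H),\ H\le N,\ \psi^{N}=\theta\}=1$ for every $\theta\in\Irr(N)$, and part (2) is the same statement with $\IBr$ in place of $\Irr$. Since conjugate characters have the same set of inducing degrees, and since $\theta(1)$ itself is available (take $H=N$, $\psi=\theta$) whenever a prime $r$ does not divide $\theta(1)$, it suffices to prove the following: for every $\theta$ and every prime $r$ dividing $\theta(1)$ there exist $H\le N$ and an irreducible (Brauer) character $\psi$ of $H$ with $\psi^{N}=\theta$ and $r\nmid\psi(1)$.

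The engine of the proof is the case where $\theta$ is $G$-invariant. Because $N$ is a normal Hall subgroup, a $G$-invariant $\theta$ extends to some $\hat\theta\in\Irr(G)$ by the coprime extension theorem, and its Brauer analog supplies an extension $\hat\theta\in\IBr(G)$ for part (2). As $G$ is a pseudo-$M$-group (resp.\ pseudo-$M_{p}$-group), $\hat\theta$ is pseudo monomial, so for the given $r$ there are $U\le G$ and an irreducible (Brauer) character $\mu$ of $U$ with $\mu^{G}=\hat\theta$ and $r\nmid\mu(1)$. Restricting to $N$ and applying Mackey's formula, with $V=U\cap N$ (which is normal in $U$ since $N\trianglelefteq G$), gives $\theta=\hat\theta_{N}=(\mu^{G})_{N}=\sum_{g}\bigl({}^{g}(\mu_{V})\bigr)^{N}$, the sum running over representatives of $G/NU$. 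Each summand is a genuine (Brauer) character of strictly positive degree, so the irreducibility of $\theta$, together with the linear independence of the irreducible (Brauer) characters, forces a single summand: $G=NU$ and $\theta=(\mu_{V})^{N}$. Decomposing $\mu_{V}$ into irreducible constituents and repeating the same positivity argument forces $\mu_{V}$ to be irreducible. Hence $\mu_{V}$ is an irreducible (Brauer) character of $V\le N$ with $(\mu_{V})^{N}=\theta$ and $r\nmid\mu_{V}(1)=\mu(1)$, which is exactly the witness we need; in the $M$-group situation $\mu$ is linear and this recovers monomiality of $\theta$, i.e.\ Dornhoff's theorem.

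For a general $\theta\in\Irr(N)$ I would reduce to the invariant case via Clifford theory. Let $T=I_{G}(\theta)$ be the inertia group, so $N$ is a normal Hall subgroup of $T$, $\theta$ is $T$-invariant, and $\chi:=\hat\theta^{\,G}\in\Irr(G)$ is the Clifford correspondent of the coprime extension $\hat\theta\in\Irr(T)$. Since the set of inducing degrees is computed entirely inside $N$, it would suffice to run the invariant-case engine inside $T$, and for that one needs $\hat\theta$ to be pseudo monomial in $T$. I would attempt this by induction on $|G|$ when $T<G$, pushing the pseudo-monomial witnesses for $\chi$ in $G$ down to witnesses for $\hat\theta$ in $T$ along the Clifford correspondence; for part (1) the solvability of pseudo-$M$-groups furnishes a minimal normal subgroup of $G$ inside $N$, opening an alternative induction on $|N|$.

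The step I expect to be the main obstacle is exactly this reduction to the invariant case, that is, the control of the inertia group $T$. The positivity-and-forcing trick that settles the invariant case depends on $\theta$ being the \emph{restriction} of an irreducible (Brauer) character of the whole ambient group, and this is available only when $\theta$ is invariant: for non-invariant $\theta$ one sees $\theta$ merely as a \emph{constituent} of the restriction of $\chi$, and a constituent of an induced character need not itself be induced with controlled degree, since $U\cap N$ is generally not normal in $N$. Resolving this will require either showing that the inertia group inherits the hypothesis in the precise form needed, or carrying out the $|N|$-induction through a minimal normal subgroup of $G$ contained in $N$, splitting on whether this subgroup lies in $\ker\theta$. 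Everything else — the coprime extension theorems, Mackey's formula, and the degree bookkeeping — is routine and identical in the ordinary and the Brauer settings.
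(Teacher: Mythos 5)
Your invariant-case engine is sound, but your own diagnosis is accurate: the reduction from a general $\theta$ to the $G$-invariant case is a genuine gap, not a routine step, and nothing in the proposal closes it. The obstruction is structural. The hypothesis that $G$ is a pseudo-$M$-group (resp.\ pseudo-$M_p$-group) constrains only the characters of $G$ itself; when $\theta$ is not invariant, the inertia group $T = I_G(\theta)$ is a proper subgroup, and there is no reason for the coprime extension $\hat\theta \in \Irr(T)$ to be pseudo-monomial, because $T$ need not be a pseudo-$M$-group at all --- monomial-type hypotheses are not inherited by subgroups, which is exactly the phenomenon behind the Dade/van der Waall examples cited in the paper. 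Consequently induction on $|G|$ has no inductive hypothesis to apply to $T$, and ``pushing witnesses for $\chi = \hat\theta^{\,G}$ down to witnesses for $\hat\theta$ along the Clifford correspondence'' is precisely as hard as the theorem itself: a pair $(U,\mu)$ with $\mu^G = \chi$ need not interact with $T$ in any usable way. Your fallback induction on $|N|$ stalls for the same reason in the case where the minimal normal subgroup of $G$ inside $N$ does not lie in $\ker\theta$. (In part (2) there is a further unaddressed dependency: your engine needs an extension theorem for $G$-invariant irreducible Brauer characters of a normal Hall subgroup, which you would have to supply.)

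The paper's proof shows the detour through invariance is unnecessary, which is why it never meets your obstacle; it proves the $\pi$-partial version (Theorem \ref{thm02-pi}) and deduces part (2) as $\pi = p'$ and part (1) by taking a prime $p \nmid |G|$. Work with an arbitrary $\theta$: choose $\varphi$ in ${\rm IBr}(G)$ (resp.\ ${\rm I}_\pi(G)$) lying over $\theta$, let $(H,\psi)$ be \emph{any} witness with $\psi^G = \varphi$, and let $\rho$ be the set of primes dividing $|N|$. The key point is that $N \cap H$ is a normal Hall $\rho$-subgroup of $H$, so degree arithmetic does all the work: $\theta(1) = (\varphi(1))_\rho$ since $\varphi(1)/\theta(1)$ divides the $\rho'$-number $|G:N|$, and any irreducible constituent $\tau$ of $\psi_{N\cap H}$ has $\tau(1) = (\psi(1))_\rho$; writing $\varphi(1) = |G:NH|\,|NH:H|\,\psi(1)$ with $|G:NH|$ a $\rho'$-number and $|NH:H| = |N:N\cap H|$ a $\rho$-number gives $\tau^N(1) = |N:N\cap H|\,\tau(1) = (\varphi(1))_\rho = \theta(1)$. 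After replacing $\theta$ by a $G$-conjugate (harmless, since conjugates have the same sets of inducing degrees), $\theta$ is a constituent of $\tau^N$, and equality of degrees forces $\tau^N = \theta$. Thus every witness $\psi$ for $\varphi$ hands you a witness $\tau$ for $\theta$ with $\tau(1) \mid \psi(1)$, so the $\gcd = 1$ condition transfers from $\varphi$ to $\theta$ directly --- no inertia groups, no extension theorems, no Mackey analysis. If you want to complete your write-up, replacing the invariant-case reduction by this restriction-to-$N \cap H$ argument is the way to do it.
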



In fact, we will also show that these results can also be shown in the context of Isaacs' $\pi$-partial characters.  Since the statements of the results are similar, we do not repeat them here.  We will review the details of Isaacs' $\pi$-theory and state the analogs of our results as it is appropriate.

\section{Isaacs generalization of Taketa's theorem}

We first introduce the definition of classes $\mathcal{F}$ of groups.  We say that $\mathcal{F}$ is a {\it class of groups}  if $\mathcal{F}$ satisfies the following conditions.

{\rm (i)} If $K \cong H \in \mathcal{F}$, then $K \in \mathcal{F}$. (Closure under isomorphisms)

{\rm (ii)} If $K\subseteq H\in \mathcal{F}$, then $K\in \mathcal{F}$. (Closure under subgroups)

{\rm (iii)} If $N\lhd G$ and $N\in \mathcal{F}$ and $G/N\in \mathcal{F}$, then $G\in \mathcal{F}$.  (Closure under extensions)

Two examples of classes are solvable groups and $\pi$-groups for a fixed set $\pi$ of primes.  We next show that the residual for a class of groups lies in the class.

\begin{lem}\label{lemma1}
If $\mathcal{F}$ is a class of groups and $G$ is any group, then $G$ has a unique $\mathcal{F}$-residual: i.e., there exists a normal subgroup $G^{\mathcal{F}}$ that is minimal with respect to the property that $G/G^{\mathcal{F}}\in \mathcal{F}$.
\end{lem}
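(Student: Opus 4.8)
The plan is to define $G^{\mathcal{F}}$ as the intersection of all normal subgroups whose quotient lies in $\mathcal{F}$, and then to verify that this particular intersection still has its quotient in $\mathcal{F}$ and is contained in every such normal subgroup. Concretely, set
\[
\mathcal{N} = \{ N \lhd G \mid G/N \in \mathcal{F} \}.
\]
First I would check that $\mathcal{N}$ is nonempty. Since $\mathcal{F}$ is nonempty and closed under subgroups by (ii), the trivial group lies in $\mathcal{F}$; hence $G/G \cong 1 \in \mathcal{F}$ and $G \in \mathcal{N}$.

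The heart of the argument is to show that $\mathcal{N}$ is closed under the intersection of two of its members. Suppose $N_{1}, N_{2} \in \mathcal{N}$. The diagonal homomorphism $G \to (G/N_{1}) \times (G/N_{2})$ sending $g \mapsto (gN_{1}, gN_{2})$ has kernel $N_{1} \cap N_{2}$, so $G/(N_{1} \cap N_{2})$ is isomorphic to a subgroup of $(G/N_{1}) \times (G/N_{2})$. I claim the direct product lies in $\mathcal{F}$: the factor $(G/N_{1}) \times 1$ is a normal subgroup isomorphic to $G/N_{1} \in \mathcal{F}$, and the corresponding quotient is isomorphic to $G/N_{2} \in \mathcal{F}$, so closure under extensions (iii) gives $(G/N_{1}) \times (G/N_{2}) \in \mathcal{F}$. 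Then closure under subgroups (ii) and isomorphisms (i) yields $G/(N_{1} \cap N_{2}) \in \mathcal{F}$, i.e.\ $N_{1} \cap N_{2} \in \mathcal{N}$.

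Since $G$ is finite, $\mathcal{N}$ is a finite set, so $G^{\mathcal{F}} := \bigcap_{N \in \mathcal{N}} N$ is a finite intersection of members of $\mathcal{N}$. Iterating the intersection step shows $G^{\mathcal{F}} \in \mathcal{N}$, hence $G/G^{\mathcal{F}} \in \mathcal{F}$. By construction $G^{\mathcal{F}} \subseteq N$ for every $N \in \mathcal{N}$, so $G^{\mathcal{F}}$ is the unique minimal normal subgroup whose quotient lies in $\mathcal{F}$, which establishes both existence and uniqueness.

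I expect the only genuine obstacle to be the intersection step — specifically, recognizing the direct product as an extension of $G/N_{2}$ by $G/N_{1}$ so that axiom (iii) applies, and embedding $G/(N_{1}\cap N_{2})$ into that product. Everything else is bookkeeping that rests on the finiteness of $G$, which guarantees that the a priori infinite intersection defining $G^{\mathcal{F}}$ reduces to a finite one.
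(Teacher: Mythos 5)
Your proof is correct, and it takes a genuinely different route from the paper's. You establish the lemma as stated via the classical formation-theoretic argument: the set $\mathcal{N}=\{N\lhd G \mid G/N\in\mathcal{F}\}$ is closed under pairwise intersections because $G/(N_1\cap N_2)$ embeds diagonally in $(G/N_1)\times(G/N_2)$, which lies in $\mathcal{F}$ by extension closure applied to the normal factor $(G/N_1)\times 1$; finiteness of $G$ then makes $G^{\mathcal{F}}=\bigcap_{N\in\mathcal{N}}N$ a member of $\mathcal{N}$, so it is the minimum of $\mathcal{N}$ and in particular the unique minimal element. The paper instead takes the existence of a minimal $G^{\mathcal{F}}$ for granted (immediate from finiteness, since $G\in\mathcal{N}$) and spends its one-line proof on a different fact, the idempotency $(G^{\mathcal{F}})^{\mathcal{F}}=G^{\mathcal{F}}$: extension closure applied to $G^{\mathcal{F}}/(G^{\mathcal{F}})^{\mathcal{F}}$ inside $G/(G^{\mathcal{F}})^{\mathcal{F}}$ gives $G/(G^{\mathcal{F}})^{\mathcal{F}}\in\mathcal{F}$, and minimality then forces equality. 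That idempotency is exactly how the lemma is invoked later (``Since $N^{\mathcal{F}}=N$ by Lemma \ref{lemma1}'' in the proof of Theorems \ref{theorem1} and \ref{theorem2}), and your argument as written does not address it; but it follows at once from what you proved: your uniqueness makes $(G^{\mathcal{F}})^{\mathcal{F}}$ characteristic in $G^{\mathcal{F}}$ and hence normal in $G$, at which point the paper's extension computation combined with your minimum property yields the equality. Indeed, that normality is used silently in the paper's proof and genuinely rests on the uniqueness you established, so the two arguments complement each other: yours buys a fully justified existence and uniqueness statement (a true minimum, using all three closure axioms, modulo the harmless assumption that $\mathcal{F}$ is nonempty so that $1\in\mathcal{F}$), while the paper's buys the residual-of-the-residual identity that the proofs of the main theorems actually consume.
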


\begin{proof}
Note that $G^{\mathcal{F}}/(G^{\mathcal{F}})^{\mathcal{F}}\in \mathcal{F}$ and $G/(G^{\mathcal{F}})^{\mathcal{F}}/G^{\mathcal{F}}/(G^{\mathcal{F}})^{\mathcal{F}}
\cong G/G^{\mathcal{F}}\in \mathcal{F}$.  We have by (iii) that $G/(G^{\mathcal{F}})^{\mathcal{F}}\in \mathcal{F}$.  It follows that $(G^{\mathcal{F}})^{\mathcal{F}}=G^{\mathcal{F}}$.
\end{proof}

We now show that under an additional hypothesis, we can prove Theorem \ref{theorem1} for a set of primes in the more general setting of Isaacs' $\pi$-theory.  Let $\pi$ be a set of primes and $G$ be a $\pi$-separable group (recall that $G$ is a $\pi$-separable group if $G$ has a normal series such that each factor is either a $\pi$-group or a $\pi'$-group).   Isaacs introduced $\pi$-theory in \cite{Isaacs1984}.   In his recent monograph \cite{solv text}, Isaacs gives a full account of $\pi$-theory.

The {\it $\pi$-partial characters} of $G$ are the restrictions of characters of $G$ to $G^o$, where $G^o$ denotes the set of $\pi$-elements of $G$.  Denote by ${\rm I}_{\pi} (G)$ the set of those $\pi$-partial characters that cannot be written as a sum of other $\pi$-partial characters.  The members of ${\rm I}_{\pi} (G)$ are called the {\it irreducible $\pi$-partial characters} of $G$.  Isaacs shows that ${\rm I}_{\pi} (G)$ plays the same role for $\pi$-partial characters as ${\rm IBr} (G)$ for $p$-Brauer characters, where $p$ is a prime and ${\rm IBr} (G)$ is the set of irreducible $p$-Brauer characters of $G$.  In fact, when $\pi = \{ p \}'$ and $G$ is $p$-solvable, ${\rm I}_{p'} (G)$ and ${\rm IBr} (G)$ coincide.

Isaacs gives two different proofs that the set ${\rm I}_{\pi}(G)$ is a basis for the vector space of class functions on $G^o$, one in \cite{Isaacs1984} and a different proof in \cite{Isaacs1994}. Also, Isaacs defines a canonical subset ${\rm B}_{\pi} (G) \subset {\rm Irr} (G)$, where ${\rm Irr} (G)$ is the set of irreducible (complex) characters of $G$, and shows that the restriction map ${\rm B}_{\pi}(G)\rightarrow {\rm I}_{\pi}(G)$ defines a bijection between these sets, see \cite[Corollary 10.2]{Isaacs1984}.  In fact, we use the set ${\rm B}_\pi (G)$ to define the kernel of the $\pi$-partial characters.  Given $\phi \in {\rm I}_\pi (G)$, there is a unique character $\chi \in {\rm B}_\pi (G)$ so that $\chi^o = \phi$.  We define $\ker \phi = \ker \chi$.   In \cite{my pre}, we show that this definition for the kernel of partial characters is consistent with the definition for the kernel of Brauer characters.

In \cite{Isaacs1994}, Isaacs defines induction for $\pi$-partial characters and just like for ordinary characters, a $\pi$-partial character is monomial if it is induced from a linear $\pi$-partial character of some subgroup.  If every $\pi$-partial character $\varphi\in {\rm I}_{\pi}(G)$ is monomial, then we say $G$ is an {\it $M_{\pi}$-group}.  (Note that our notation is a little different from that of \cite{DIG}.)  As we stated above, when $\pi = p'$, we have ${\rm I}_{\pi} (G) = {\rm IBr}(G)$, and so, $M_{\pi} = M_{p'}$.  In particular, note that $M_p$ and $M_{\{ p \}}$ are not the same!

In \cite{DIG}, Dastouri, Iranmanesh and Ghasemi prove when $G$ is an $M_{\pi}$-group then $G/{\bf O}_{\pi'} (G)$ is solvable.
We generalize their result to the following version.

\begin{theorem}\label{theorem2}
Let $\mathcal {F}$ be a class of groups, let $\pi$ be a set of primes, and let $G$ be a $\pi$-separable group so that ${\bf O}_{\pi'} (G) = 1$.  If for every $\pi$-partial character $\chi \in {\rm I}_{\pi} (G)$, there exists a subgroup $H \leq G$ and a $\pi$-partial character $\psi \in {\rm I}_{\pi} (H)$ such that $\psi^{G} = \chi$ and $H/\ker\psi \in \mathcal {F}$, then $G \in \mathcal{F}$.
\end{theorem}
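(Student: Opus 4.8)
The plan is to prove the equivalent statement $G^{\mathcal F}=1$, where $G^{\mathcal F}$ is the $\mathcal F$-residual furnished by Lemma~\ref{lemma1}. The whole argument rests on the fact that the irreducible $\pi$-partial characters are jointly faithful modulo ${\bf O}_{\pi'}$: namely $\bigcap_{\chi\in{\rm I}_{\pi}(G)}\ker\chi={\bf O}_{\pi'}(G)$, which one verifies through the ${\rm B}_{\pi}(G)$--correspondence used to define the kernel, and which specializes, when $\pi=p'$, to the familiar identity $\bigcap_{\varphi\in{\rm IBr}(G)}\ker\varphi={\bf O}_{p}(G)$. Since ${\bf O}_{\pi'}(G)=1$ by hypothesis, this common kernel is trivial; so if $G^{\mathcal F}\neq1$ there exists $\chi\in{\rm I}_{\pi}(G)$ with $G^{\mathcal F}\not\subseteq\ker\chi$, and the goal is to turn this into a contradiction.

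I would induct on $|G|$, and the structural key is that the hypothesis is inherited by quotients but not by subgroups. Indeed, if $M\lhd G$ and $\bar\chi\in{\rm I}_{\pi}(G/M)$ is inflated from $M$, write its inflation $\chi\in{\rm I}_{\pi}(G)$ as $\chi=\psi^{G}$ with $\psi\in{\rm I}_{\pi}(H)$ and $H/\ker\psi\in\mathcal F$. Using that the ${\rm B}_{\pi}$-kernel behaves under induction as in the ordinary theory, $\ker\chi=\bigcap_{g\in G}(\ker\psi)^{g}$, so $M\subseteq\ker\chi\subseteq\ker\psi\subseteq H$; hence $\psi$ is inflated from $H/M$, one has $(H/M)\big/(\ker\psi/M)\cong H/\ker\psi\in\mathcal F$, and $\bar\psi^{\,G/M}=\bar\chi$. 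Thus every quotient $G/M$ again satisfies the hypotheses of the theorem, while there is no reason a subgroup should; this forces the induction to proceed solely through normal quotients.

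Now let $N$ be a minimal normal subgroup of $G$. As $G$ is $\pi$-separable and ${\bf O}_{\pi'}(G)=1$, $N$ is an elementary abelian $p$-group for some $p\in\pi$, and $N\subseteq{\bf O}_{\pi}(G)$, where ${\bf C}_{G}({\bf O}_{\pi}(G))\subseteq{\bf O}_{\pi}(G)$ by the Hall--Higman-type lemma for $\pi$-separable groups \cite{solv text}. Applying the inductive hypothesis to $G/N$ when ${\bf O}_{\pi'}(G/N)=1$, and otherwise to $G/U$ where $U/N={\bf O}_{\pi'}(G/N)$ (here $U\lhd G$, ${\bf O}_{\pi'}(G/U)=1$, and $G/U$ inherits the hypothesis by the previous paragraph), I obtain $G/N\in\mathcal F$ or $G/U\in\mathcal F$, hence $G^{\mathcal F}\subseteq N$ or $G^{\mathcal F}\subseteq U$. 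In the latter case $G^{\mathcal F}/N\hookrightarrow U/N$ is a $\pi'$-group, so if $N\not\subseteq G^{\mathcal F}$ then $G^{\mathcal F}$ is a normal $\pi'$-subgroup and $G^{\mathcal F}\subseteq{\bf O}_{\pi'}(G)=1$; combining these reductions with the triviality of every normal $\pi'$-subgroup of $G$ leaves exactly the case $G^{\mathcal F}=N$, a minimal normal $\pi$-subgroup with $G/N\in\mathcal F$, which I now aim to contradict.

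To contradict $G^{\mathcal F}=N\neq1$, choose $\chi\in{\rm I}_{\pi}(G)$ with $N\not\subseteq\ker\chi$ and write $\chi=\psi^{G}$ with $\psi\in{\rm I}_{\pi}(H)$ and $H/\ker\psi\in\mathcal F$; since $H=G$ would give $G/\ker\chi\in\mathcal F$ and hence $N=G^{\mathcal F}\subseteq\ker\chi$, the inducing subgroup $H$ is proper. By the $\pi$-analogue of Clifford's theorem and the Clifford correspondence for $\pi$-partial characters \cite{Isaacs1994}, $\chi$ lies over a nontrivial linear character $\theta\in{\rm Irr}(N)$ with inertia group $T=I_{G}(\theta)\supseteq{\bf C}_{G}(N)$, and a Mackey analysis of $(\psi^{G})_{N}$ ties $\theta$ to the constituents of $\psi$ on $N\cap H$. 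The target of this step is to establish that $N\in\mathcal F$: once this is known, closure of $\mathcal F$ under extensions together with $G/N\in\mathcal F$ yields $G\in\mathcal F$, i.e. $G^{\mathcal F}=1$, the desired contradiction. I expect this final step --- extracting the membership $N\in\mathcal F$ from a \emph{proper} induction $\chi=\psi^{G}$ with $H/\ker\psi\in\mathcal F$, carried out inside Isaacs' $\pi$-partial Clifford theory and relying on the compatibility of the ${\rm B}_{\pi}$-kernel with induction and inflation --- to be the main obstacle; by contrast, the residual bookkeeping and the quotient reductions above are essentially formal.
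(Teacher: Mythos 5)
Your proposal has a genuine and fatal gap: the step that would actually prove the theorem is the one you explicitly defer. All of your reductions (quotient inheritance of the hypothesis, passage to a minimal normal subgroup) are scaffolding around a final contradiction that you describe only as ``the main obstacle,'' namely extracting $N\in\mathcal{F}$ from a proper induction $\chi=\psi^{G}$. That extraction is precisely where the content of the theorem lies, and nothing in your Clifford-theoretic setup supplies it; note also that even for abelian $N$ the membership $N\in\mathcal{F}$ is not automatic, since $\mathcal{F}$ need not contain all abelian groups (e.g.\ $\mathcal{F}$ could be the class of $2$-groups). The paper's proof uses none of your machinery --- no induction on $|G|$, no minimal normal subgroup, no Clifford theory. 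Its engine is a Taketa-style minimal-degree choice: take $N=G^{\mathcal{F}}$, choose $\varphi\in{\rm I}_{\pi}(G)$ of \emph{minimal degree} subject to $N\not\leq\ker\varphi$ (possible since $\bigcap_{\varphi}\ker\varphi={\bf O}_{\pi'}(G)=1$), and write $\varphi=\psi^{G}$ with $H/\ker\psi\in\mathcal{F}$. The case $H=G$ is immediately contradictory, and for $H<G$ every nonprincipal irreducible constituent of $(1_{H^o})^{G}$ has degree strictly less than $\varphi(1)=|G:H|\psi(1)$, so minimality forces $N\leq\ker\bigl((1_{H^o})^{G}\bigr)\leq H$. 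Then pure residual bookkeeping finishes: $H/N\leq G/N\in\mathcal{F}$ gives $H^{\mathcal{F}}\leq N$, Lemma~\ref{lemma1} gives $N^{\mathcal{F}}=N$ and hence $H^{\mathcal{F}}=N$, and $H/\ker\psi\in\mathcal{F}$ then gives $N=H^{\mathcal{F}}\leq\ker\psi$, whence $N\leq\ker(\psi^{G})=\ker\varphi$, a contradiction. Without a minimality choice, your setup has no mechanism to force the inducing subgroup to contain $N$ or to connect $H/\ker\psi\in\mathcal{F}$ back to $N$.

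Separately, your structural claim about $N$ is false: a minimal normal subgroup of a $\pi$-separable group with ${\bf O}_{\pi'}(G)=1$ is a $\pi$-group, but it need not be an elementary abelian $p$-group, because $\pi$-separable does not mean $\pi$-solvable. For instance $G=A_5$ with $\pi=\{2,3,5\}$ is $\pi$-separable with ${\bf O}_{\pi'}(G)=1$, and its unique minimal normal subgroup is $A_5$ itself. This is not a repairable side issue: when $\mathcal{F}$ is the class of solvable groups, the case the theorem must eliminate is exactly a nonabelian $N$, so assuming $N$ elementary abelian (and then invoking ``a nontrivial linear $\theta\in\Irr(N)$'') assumes away the heart of the problem. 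Two further leaks: your case analysis concludes $G^{\mathcal{F}}=N$, but the branch $N\subseteq G^{\mathcal{F}}\subseteq U$ only yields that $G^{\mathcal{F}}/N$ is a $\pi'$-group, not that it is trivial; and the identity $\ker(\psi^{G})=\bigcap_{g\in G}(\ker\psi)^{g}$ needs an argument in the $\pi$-partial setting, where kernels are defined through the ${\rm B}_{\pi}$-correspondence rather than by a representation (the paper itself only ever uses the easy containment: $N\lhd G$ and $N\leq\ker\psi$ imply $N\leq\ker(\psi^{G})$).
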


We now prove Theorem \ref{theorem1} and Theorem \ref{theorem2} together.

\begin{proof}[Proof of Theorems \ref{theorem1} and \ref{theorem2}]
Set ${\rm I} (G)$ to be ${\rm IBr} (G)$ if we are are in Theorem \ref{theorem1} and ${\rm I}_{\pi} (G)$ if we are in Theorem \ref{theorem2}.  Write $N = G^{\mathcal{F}}$ and our goal is to prove that $N = 1$.  Once our goal is achieved, we will have $G = G/1 = G/N \in \mathcal{F}$;
thus we assume that $N > 1$, and we work to obtain a contradiction.

When we are in Theorem \ref{theorem1} take $O$ to be ${\bf O}_p (G)$ and when we are in Theorem \ref{theorem2} take $O$ to be ${\bf O}_{\pi'} (G)$.  Since $\cap_{\varphi \in {\rm I}(G)} \ker\varphi =  O = 1$ (see Lemma 2.1 of \cite{super} for Brauer characters and \cite{my pre} for partial characters), we see that $N \not\leq \ker\varphi$ for some $\varphi\in {\rm IBr}(G)$, and we can choose either a Brauer character or a $\pi$-partial $\varphi$ with minimum possible degree.  It follows by hypothesis that $\varphi = \psi^{G}$ where $\psi\in {\rm I} (H)$ and $H/\ker\psi \in \mathcal{F}$.

If $H = G$, then $\varphi = \psi$, and it follows that $N = G^{\mathcal{F}} \leq \ker\psi = \ker\varphi$, a contradiction.

Now, consider $\theta = (1_{H^o})^{G}$, where $H < G$ and $1_{H^o}$ is the principal Brauer character or $\pi$-partial character of $H$.  Then the principal Brauer character or $\pi$-partial character $1_{G^o}$ of $G$ is a constituent of $\theta$, and thus, every other irreducible constituent of $\theta$ has degree that is strictly less than $\theta (1) = |G: H| \leq \varphi (1) = |G: H| \psi (1)$.  We deduce, by the choice of $\varphi$, that $N$ is contained in the kernel of every irreducible constituent of $\theta$, and thus
$$N \leq \ker\theta = \ker((1_{H^o})^{G}) \leq H.$$

Now, $H/N \leq G/N \in \mathcal {F}$, and thus, $H/N\in \mathcal {F}$, and so, $H^{\mathcal {F}} \leq N$. But
$$N/H^{\mathcal {F}} \leq H/H^{\mathcal {F}} \in \mathcal {F}$$
and therefore, $N^{\mathcal {F}} \leq H^{\mathcal {F}}$.  Since $N^{\mathcal {F}}= N$ by Lemma \ref{lemma1}, we have  $H^{\mathcal {F}} = N$.

We know that $H/\ker\psi \in \mathcal {F}$, and so, $H^{\mathcal {F}} \leq \ker\psi$.   Thus, $N \subseteq \ker\psi$ and since $N \lhd G$, we conclude that $N \subseteq \ker (\psi^{G}) = \ker \varphi$.  This is a contradiction.
\end{proof}

\section{Pseudo $M_p$-groups}

We now work to prove the solvability of pseudo $M_p$-groups.  First, we need to define the analogs for $\pi$-partial characters.

We now define pseudo $M_\pi$-groups. For a $\pi$-partial character $\varphi\in {\rm I}_\pi (G)$, write
$$S(\varphi) = \{\psi\in {\rm I}_\pi (H) \mid H \leq G, \psi^{G} = \varphi \}.$$
When ${\rm gcd} \{\psi(1) \mid \psi \in S(\varphi) \} = 1$, we define $\varphi$ to be {\it pseudo monomial}.  We say $G$ is a {pseudo $M_\pi$-group} if every $\pi$-partial character in ${\rm I}_\pi (G)$ is pseudo-monomial.  With this definition we have another generalization about the solvability of $M_\pi$-groups.

\begin{theorem} \label{pseudo M_pi}
Let $G$ be a group and let $\pi$ be a set of primes.  Assume $G$ is $\pi$-separable and ${\bf O}_{\pi'} (G) = 1$.
If $G$ is a pseudo $M_\pi$-group, then $G$ is solvable.
\end{theorem}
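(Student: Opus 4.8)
The plan is to mirror the proof of Theorem \ref{theorem2}, taking $\mathcal{F}$ to be the class of solvable groups (which is closed under isomorphism, subgroups and extensions, hence a class of groups in the sense of Section 2). Suppose, for contradiction, that $G$ is not solvable, and set $N = G^{\mathcal{F}}$, the solvable residual furnished by Lemma \ref{lemma1}; then $N > 1$ and, being the terminal member of the derived series, $N$ is perfect. Since $\bigcap_{\varphi \in {\rm I}_\pi(G)} \ker\varphi = {\bf O}_{\pi'}(G) = 1$, there is some $\varphi \in {\rm I}_\pi(G)$ with $N \not\le \ker\varphi$, and I would fix such a $\varphi$ of least possible degree. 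By hypothesis $\varphi$ is pseudo monomial, so $\gcd\{\psi(1) \mid \psi \in S(\varphi)\} = 1$; note $S(\varphi)$ is nonempty, as it contains $\varphi$ itself.

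First I would show that $N \le H$ for every $\psi \in {\rm I}_\pi(H)$ in $S(\varphi)$. This is exactly the computation in the proof of Theorem \ref{theorem2}: when $H < G$ the character $\theta = (1_{H^o})^G$ has degree $|G:H| \le \varphi(1)$, so every constituent of $\theta$ other than $1_{G^o}$ has degree strictly smaller than $\varphi(1)$; the minimality of $\varphi$ then forces $N$ into the kernel of each constituent, whence $N \le \ker\theta \le H$ (for $H = G$ this is trivial). Next I would apply Clifford theory for $\pi$-partial characters to $N \lhd G$: because $\varphi$ is irreducible, $\varphi_N$ is a multiple of the sum of a single $G$-orbit of some $\beta \in {\rm I}_\pi(N)$, and $N \not\le \ker\varphi$ forces $\beta \ne 1_N$. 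Since $N \le H$ and $\varphi = \psi^G$, the restriction $\psi_N$ has an irreducible constituent that is $G$-conjugate to $\beta$, and hence $\beta(1)$ divides $\psi(1)$ for every $\psi \in S(\varphi)$.

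The decisive point is to see that $\beta(1) > 1$. Here I would use that $N$ is perfect together with the ${\rm B}_\pi$-correspondence: a hypothetical linear $\beta$ would lift to a linear character in ${\rm B}_\pi(N) \subseteq {\rm Irr}(N)$ of the same degree, which must be trivial since $N = N'$, forcing $\beta = 1_N$ and contradicting $\beta \ne 1_N$. Thus $\beta(1) > 1$, and choosing any prime $q \mid \beta(1)$ we obtain $q \mid \psi(1)$ for all $\psi \in S(\varphi)$, so that $q \mid \gcd\{\psi(1) \mid \psi \in S(\varphi)\} = 1$, the desired contradiction. I expect the main obstacle to be organizational rather than conceptual: one must extract $q$ from a single $\beta$ lying under $\varphi$ so that it divides every inducing degree simultaneously (which is what the step $N \le H$, valid uniformly over $S(\varphi)$, secures), and one must confirm that the Clifford theory and ${\rm B}_\pi$ machinery invoked are legitimately available --- which is precisely where the $\pi$-separability of $G$ enters, since it underlies all of $\pi$-partial character theory, the kernel definition of Section 2, and the identity $\bigcap_{\varphi} \ker\varphi = {\bf O}_{\pi'}(G)$.
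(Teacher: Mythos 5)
Your proposal is correct and follows essentially the same route as the paper's proof: a minimal-degree $\varphi$ with $N \not\le \ker\varphi$, the induced principal character $(1_{H^o})^G$ to force $N \le H$, Clifford theory for $\pi$-partial characters to see that every $\psi \in S(\varphi)$ has restriction to $N$ made of $G$-conjugates of a fixed nonlinear $\beta$, and the resulting divisibility contradicting $\gcd\{\psi(1)\} = 1$. The only difference is organizational: you derive $\beta(1) \mid \psi(1)$ uniformly for all $\psi \in S(\varphi)$ (which also absorbs the case $H = G$), whereas the paper splits off the case $S(\varphi) = \{\varphi\}$ and then selects a single $\psi$ with $\alpha(1) \nmid \psi(1)$ to contradict; the two formulations are logically equivalent.
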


We prove Theorems \ref{pseudo M_p} and \ref{pseudo M_pi} together.

\begin{proof}[Proof of Theorems \ref{pseudo M_p} and \ref{pseudo M_pi}]
Set ${\rm I} (G)$ to be ${\rm IBr} (G)$ if we are in Theorem \ref{pseudo M_p} and ${\rm I}_{\pi} (G)$ if we are in Theorem \ref{pseudo M_pi}.  Let $N$ be the solvable residual of $G$ and our goal is to prove that $N = 1$.
Thus, we assume that $N > 1$, and we choose $\varphi\in {\rm I} (G)$ of minimal degree with $N \nsubseteq \ker\varphi$.

If $S(\varphi) = \{ \varphi\in {\rm I}(G)\}$, then $H = G$ and $\varphi = \psi^{G}$ with $\psi \in {\rm I} (H)$, then we have $\varphi = \psi$ and by the hypothesis it follows that $\varphi$ is linear.  Thus, $G/\ker\varphi$ is cyclic, and so, $N \leq \ker\varphi$, a contradiction.

Hence, we assume $|S(\varphi)| > 1$.  Hence, we may assume that $H$ is a proper subgroup of $G$ and $\varphi = \psi^{G}$ with $\psi\in {\rm I} (H)$.  It follows that all nonprincipal irreducible constituents of $\theta = (1_{H^o})^{G}$ have degree strictly less than $\varphi (1) = |G:H| \psi(1)$.  Therefore, by the choice of $\varphi$, we have $N \subseteq \ker\theta \subseteq H$.

Now, let $\alpha$ be an irreducible constituent of $\varphi_{N}$.  Since $N \nsubseteq \ker\varphi$ we have $\alpha \neq 1_{N^o}$, and because $N = N'$, we deduce that $\alpha (1) > 1$.  By the hypothesis, we may choose $H < G$ and $\psi\in {\rm I} (H)$ with $\psi^{G} = \varphi$ and $\alpha(1) \nmid \psi(1)$.  In light of the previous paragraph, we have $N \subseteq H$.

Since $\psi^{G} = \varphi$, the irreducible constituents of $\psi_{N}$ are all $G$-conjugate to $\alpha$, and so, they all have degree $\alpha (1)$.  Therefore, $\alpha(1) \mid \psi(1)$ and this is a contradiction.  Thus, $N = 1$ and $G = G/1$ is solvable.
\end{proof}

We now turn to the analog of Okuyama's theorem that $M$-groups are $M_p$-groups for every prime $p$.  Notice that Theorem \ref{thm01} is a special case of Theorem \ref{thm01-Mpi} since an pseudo-$M$-group is necessarily solvable taking $\pi = p'$.

\begin{theorem}\label{thm01-Mpi}
If $G$ is a pseudo-$M$-group, then $G$ is a pseudo-$M_\pi$-group for every set of primes $\pi$.
\end{theorem}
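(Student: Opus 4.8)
The plan is to reduce the statement, one character at a time, to the ordinary pseudo-$M$ hypothesis by passing through the canonical set ${\rm B}_\pi(G)$. Since $G$ is a pseudo-$M$-group it is solvable by Isaacs' Theorem C of \cite{Isaacs19842}, hence $\pi$-separable for every set of primes $\pi$, so the whole machinery of $\pi$-partial characters is available. Fix $\varphi \in {\rm I}_\pi(G)$; I must exhibit subgroups and irreducible $\pi$-partial characters inducing $\varphi$ whose degrees have $\gcd$ equal to $1$. By the restriction bijection ${\rm B}_\pi(G) \to {\rm I}_\pi(G)$ of \cite[Corollary 10.2]{Isaacs1984}, there is a unique $\chi \in {\rm B}_\pi(G) \subseteq {\rm Irr}(G)$ with $\chi^o = \varphi$, and $\chi(1) = \varphi(1)$.

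The heart of the argument is to convert ordinary inductions of $\chi$ into $\pi$-partial inductions of $\varphi$ of the same degree. Suppose $\rho \in {\rm Irr}(H)$ for some $H \leq G$ satisfies $\rho^G = \chi$. Restricting to $\pi$-elements and using that induction of partial characters is compatible with restriction to $G^o$ (see \cite{Isaacs1994}), I get $(\rho^o)^G = (\rho^G)^o = \chi^o = \varphi$. The claim I would then establish is that $\rho^o$ is already \emph{irreducible} as a $\pi$-partial character, i.e. $\rho^o \in {\rm I}_\pi(H)$. Writing $\rho^o = \sum_j d_j \xi_j$ with $\xi_j \in {\rm I}_\pi(H)$ distinct and $d_j \geq 1$, additivity of induction gives $\varphi = \sum_j d_j\, \xi_j^G$, a nonnegative integer combination of the nonzero effective $\pi$-partial characters $\xi_j^G$. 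Because ${\rm I}_\pi(G)$ is a basis for the class functions on $G^o$, expanding both sides in this basis and comparing the (nonnegative) coefficient of $\varphi$ and of every other irreducible forces each $\xi_j^G$ to be a positive multiple of $\varphi$ and then forces exactly one index to survive, with $d_j = 1$ and $\xi_j^G = \varphi$. Hence $\rho^o = \xi_j \in {\rm I}_\pi(H)$, with $(\rho^o)^G = \varphi$ and $\rho^o(1) = \rho(1)$; in particular $\rho^o \in S(\varphi)$.

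Granting this, the conclusion is immediate: every degree in $D(\chi)$ is realized as $\psi(1)$ for some $\psi = \rho^o \in S(\varphi)$, so $D(\chi) \subseteq \{\psi(1) \mid \psi \in S(\varphi)\}$, whence $\gcd\{\psi(1)\mid \psi\in S(\varphi)\}$ divides $\gcd(D(\chi)) = 1$. Thus $\varphi$ is pseudo-monomial, and since $\varphi \in {\rm I}_\pi(G)$ was arbitrary, $G$ is a pseudo-$M_\pi$-group. I expect the main obstacle to be the irreducibility claim $\rho^o \in {\rm I}_\pi(H)$: it rests on the positivity of the decomposition coefficients together with the linear independence of ${\rm I}_\pi(G)$ and the compatibility $(\rho^G)^o = (\rho^o)^G$ of partial-character induction; the remaining $\gcd$ bookkeeping is routine.
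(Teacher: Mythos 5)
Your proposal is correct and follows essentially the same route as the paper: lift $\varphi$ to an ordinary irreducible character of $G$ via the ${\rm B}_\pi$-correspondence, convert every ordinary induction $\rho^G = \chi$ into a $\pi$-partial induction $(\rho^o)^G = \varphi$ of the same degree using $(\rho^G)^o = (\rho^o)^G$, and finish with the gcd bookkeeping supplied by the pseudo-$M$ hypothesis. The only cosmetic differences are that the paper begins with an arbitrary member of $S(\varphi)$ and lifts it (using solvability of $H$) rather than starting from the canonical lift $\chi \in {\rm B}_\pi(G)$, and that your positivity/linear-independence argument for the irreducibility of $\rho^o$ spells out a step the paper leaves implicit.
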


\begin{proof}
Let $\varphi \in {\rm I}_{\pi} (G)$.  Suppose that there exists a subgroup $H \leq G$ with a $\pi$-partial character $\theta \in {\rm I}_\pi (H)$ so that $\theta^{G} = \varphi$.  Since $H$ is solvable, it follows that there exists an ordinary character $\psi \in {\rm Irr} (H)$ such that the restriction $\psi^o$ of $\psi$ to the set $H^o$ of $\pi$--elements of $H$ is $\theta$.  Thus,
$$\varphi = \theta^{G} = (\psi^o)^{G} = (\psi^{G})^o$$
and so, $\psi^{G}$ is an irreducible character of $G$.  Since $G$ is a pseudo-$M$-group, we conclude that $\psi^{G}$ is a pseudo-monomial character.  Suppose $\mu^{G} = \psi^{G}$ where $\mu \in {\rm Irr} (M)$ for a subgroup $M \le G$, it follows
$$(\mu^o)^{G} = (\mu^{G})^o = (\psi^{G})^o = \varphi,$$
and so, $\mu^o$ is an irreducible $\pi$-partial character.  Since $\psi^{G}$ is a pseudo-monomial character, it follows that ${\rm gcd} \{ \mu(1) \mid \mu\in D(\psi^{G}) \} = 1$, and thus, ${\rm gcd} \{ \theta (1) \mid \theta \in S(\varphi) \} = 1$.  We deduce that $\varphi$ is a pseudo-monomial $\pi$-partial character.  Therefore, $G$ is a pseudo-$M_\pi$-group for every set of primes $\pi$.
\end{proof}

This next theorem includes Theorem \ref{thm02} (2) as a special case since $G$ is solvable taking $\pi = p'$.  We obtain Theorem \ref{thm02} (1) by realizing that we may view $G$ as a pseudo-$M_p$-group for a prime $p$ that does not divide $|G|$ and apply Theorem \ref{thm02} (2).

\begin{theorem}\label{thm02-pi}
If $G$ is a pseudo-$M_{\pi}$-group for a set of primes $\pi$ and $N$ is a normal Hall subgroup of $G$, then $N$ is also a pseudo-$M_{\pi}$-group.
\end{theorem}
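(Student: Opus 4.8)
The plan is to descend from $G$ to $N$ by combining Isaacs' $\pi$-partial Clifford theory with the coprimality that is built into the normal Hall hypothesis. Write $\sigma$ for the set of primes dividing $|N|$, so that $|N|$ is a $\sigma$-number while $|G:N|$ is a $\sigma'$-number; for a positive integer $m$ let $m_\sigma$ denote its $\sigma$-part. Note that every degree $\varphi(1)$ with $\varphi\in{\rm I}_\pi(N)$ divides $|N|$ and is therefore a $\sigma$-number. For $\varphi\in{\rm I}_\pi(N)$ put $S_N(\varphi)=\{\beta\in{\rm I}_\pi(L)\mid L\leq N,\ \beta^N=\varphi\}$; the goal is to show ${\rm gcd}\{\beta(1)\mid\beta\in S_N(\varphi)\}=1$. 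First I would reduce this to a statement about $G$: choose an irreducible constituent $\chi\in{\rm I}_\pi(G)$ of $\varphi^G$, so that $\varphi$ is a constituent of $\chi_N$, and apply the hypothesis that $G$ is a pseudo-$M_\pi$-group to obtain ${\rm gcd}\{\psi(1)\mid\psi\in S(\chi)\}=1$. The heart of the argument is then to manufacture, for each $\psi\in S(\chi)$, a member $\gamma$ of $S_N(\varphi)$ whose degree divides $\psi(1)$; granting this, a prime dividing every $\beta(1)$ would divide every $\psi(1)$, so the two gcd's are forced to agree and the proof is complete.

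So fix $\psi\in{\rm I}_\pi(H)$ with $\psi^G=\chi$ and set $H_0=H\cap N$. Because $N$ is a normal Hall subgroup, $H_0$ is a normal Hall subgroup of $H$, and in particular $|H:H_0|$ is a $\sigma'$-number. Clifford theory provides a constituent $\beta$ of $\psi_{H_0}$ with $\psi(1)=e'\,|H:I_H(\beta)|\,\beta(1)$, and since both the ramification $e'$ and $|H:I_H(\beta)|$ divide the $\sigma'$-number $|H:H_0|$, the $\sigma$-part satisfies $\psi(1)_\sigma=\beta(1)$. Applying the Mackey formula to $\chi_N=(\psi^G)_N$ and using $N\lhd G$, I would write $\chi_N$ as a sum of characters $({}^t\beta_i)^N$ induced from the $G$-conjugates of the constituents of $\psi_{H_0}$; as $\varphi$ is a constituent of $\chi_N$, it is a constituent of $\gamma^N$ for $\gamma={}^t\beta_i\in{\rm I}_\pi({}^tH_0)$ for some $t$ and $i$, where ${}^tH_0\leq N$ and $\gamma(1)=\beta(1)=\psi(1)_\sigma$.

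The key point, and the step I expect to be the main obstacle, is to upgrade the statement that $\varphi$ is a constituent of $\gamma^N$ to the equality $\gamma^N=\varphi$, that is, to force $\gamma^N$ to be irreducible; this is precisely where the normal Hall hypothesis must be used, exactly as in Dornhoff's classical argument. On the one hand $\gamma^N(1)=|N:{}^tH_0|\,\gamma(1)=|N:H_0|\,\psi(1)_\sigma$. On the other hand, writing $T=I_G(\varphi)$ for the inertia group of $\varphi$ in $G$ and noting $T\geq N$, both the number $|G:T|$ of $G$-conjugates of $\varphi$ and the ramification of $\varphi$ in $\chi_N$ divide the $\sigma'$-number $|G:N|$, so $\chi(1)/\varphi(1)$ is a $\sigma'$-number; hence $\varphi(1)=\chi(1)_\sigma=|G:H|_\sigma\,\psi(1)_\sigma=|N:H_0|\,\psi(1)_\sigma$, using that $|G:H|_\sigma=|N:H_0|$. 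Therefore $\gamma^N(1)=\varphi(1)$, and since $\varphi$ is a constituent of $\gamma^N$ the two must coincide. Thus $\gamma\in S_N(\varphi)$ with $\gamma(1)=\psi(1)_\sigma$ dividing $\psi(1)$, which is exactly what the reduction requires; letting $\psi$ range over $S(\chi)$ yields ${\rm gcd}\{\beta(1)\mid\beta\in S_N(\varphi)\}=1$, so every $\varphi\in{\rm I}_\pi(N)$ is pseudo monomial and $N$ is a pseudo-$M_\pi$-group.
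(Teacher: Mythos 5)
Your proposal is correct and is essentially the paper's own argument: both proofs intersect the inducing subgroup $H$ with the normal Hall subgroup $N$, observe that $H\cap N$ is a normal Hall subgroup of $H$, invoke the divisibility fact that the degree quotient over a normal subgroup divides the index (applied once in $G$ and once in $H$) to identify the $\sigma$-parts (the paper's $\rho$-parts) of the degrees, and then conclude by a degree count that a constituent of $\psi_{H\cap N}$ induces irreducibly to the given character of $N$ with degree dividing $\psi(1)$, which transfers the gcd condition. The only differences are cosmetic: you make the Mackey step explicit where the paper says ``we may assume,'' and you omit the paper's initial reduction to ${\bf O}_{\pi'}(G)=1$, which plays no essential role in the degree argument.
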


\begin{proof}
Observe that $O_{\pi'} (N) = N \cap O_{\pi'} (G)$ and $N/(N \cap O_{\pi'} (G)) \cong NO_{\pi'} (G)/O_{\pi'} (G)$.  Thus, the $\pi$-partial characters of $N$ correspond to the $\pi$-partial characters of $NO_{\pi'} (G)$.  In particular, we may assume that $O_{\pi'} (G) = 1$.

Let $\theta$ be an irreducible $\pi$-partial character of $N$.  Then there exists $\varphi \in {\rm I}_\pi (G)$ so that $\theta$ is an irreducible constituent of $\varphi_{N}$ by Clifford's theorem (reference here Lemma 5.8 of \cite{solv text}).  Thus, we can find a subgroup $H$ of $G$ and a $\pi$-partial character $\psi \in {\rm I}_\pi (H)$ so that $\psi^{G} = \varphi$.  Observe that $(\psi^{NH})^{G} = \psi^{G}$, and so, $\psi^{NH} \in {\rm I}_\pi (NH)$.  Also,
$$\varphi (1) = \psi^{G} (1) = |G: H| \psi (1) = |G: NH| \psi^{NH} (1) = |G:NH| |NH:H| \psi (1).$$

Let $\tau$ be an irreducible constituent of $\psi_{N\cap H}$.  We may assume that $\theta$ is an irreducible constituent of $\tau^{N}$.  Since $\theta$ is an irreducible constituent of $\varphi_{N}$, it follows by
that $\varphi(1)/\theta(1)$ divides $|G:N|$.  We have proved in Theorem \ref{pseudo M_pi} that pseudo-$M_{\pi}$-groups are solvable.

Now, $N$ is a normal Hall $\rho$-subgroup of $G$, where $\rho$ may be considered as the set of prime divisors of $|N|$.  Notice that $N \cap H$ is a subgroup of $N$ and $N \cap H \lhd H$.  Since $(|N|, |G/N|)=1$, it follows that $(|N\cap H|, |H/N\cap H|) = 1$ because $H/(N\cap H)\cong HN/N\leq G/N$.  And thus, $N \cap H$ is a normal Hall $\rho$-subgroup of $H$.

Therefore, $\theta(1) = (\varphi(1))_{\rho}$, where $(\varphi(1))_{\rho}$ is the $\rho$-part of $\varphi (1)$, and $\tau (1) = (\psi (1))_{\rho}$ since $\psi (1)/\tau (1)$ divides $|H : (N \cap H)|=|HN : N|$. Thus, we have
$$\theta (1) = (\varphi (1))_{\rho} = (\psi^{NH} (1))_{\rho} = |N: N\cap H| (\psi (1))_{\rho} = |N: N\cap H| \tau (1) = \tau^{N} (1).$$
Since $\theta$ is an irreducible constituent of $\tau^{N}$, it follows that $\theta = \tau^{N}$ and $\tau (1)$ divides $\psi (1)$.  Therefore, we conclude by $G$ being a pseudo-$M_{\pi}$-group, that $N$ is also a pseudo-$M_{\pi}$-group.
\end{proof}

As we mentioned above, most of the work in Gunter's dissertation \cite{Gunter} is showing that certain pseudo $M$-groups have to be $M$-groups.  We want to close this paper with one result of this flavor.    Dade and Isaacs have shown when $p$ is an odd prime that monomial characters of $p$-power degree have the stronger property that every primitive character that induces the character will be linear, and then Isaacs uses this property to show that monomial $\{p\}$-partial characters also have the property that any primitive character that induces them will be linear (see note at the end of \cite{dade} and Theorems 10.1 and 10.2 of \cite{nato}).  We will use similar ideas to show that any pseudo-monomial character of $p$-power degree will be monomial and any pseudo-monomial $\{ p\}$-special character will be monomial.

\begin{lemma} \label{one}
Let $q$ be a prime and let $G$ be a group.
\begin{enumerate}
\item If $\chi \in \Irr (G)$ has $q$-power degree and is pseudo-monomial, then $\chi$ is monomial.
\item If $p$ is a prime and $\phi \in \IBr (G)$ has $q$-power degree and is pseudo-monomial, then $\phi$ is monomial.
\item If $\pi$ be a set of primes, $G$ is a $\pi$-separable group, $\phi \in {\rm I}_\pi (G)$ has $q$-power degree and is pseudo-monomial, then $\phi$ is monomial.
\end{enumerate}
\end{lemma}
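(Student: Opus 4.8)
The plan is to prove all three parts by one short argument, since the only feature distinguishing the three settings is which flavor of induction is used, and in every case induction multiplies degree by the index. So let me write $\eta$ for the character in question (thus $\eta = \chi$ in part (1) and $\eta = \phi$ in parts (2) and (3)), and let $q^a = \eta(1)$ be its degree. Let $\mathcal{D}$ denote the associated set of inducing degrees: namely $D(\chi)$ in part (1), and $\{\psi(1) \mid \psi \in S(\phi)\}$ in parts (2) and (3). The pseudo-monomiality hypothesis is precisely that $\gcd(\mathcal{D}) = 1$.

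The key observation is that every element of $\mathcal{D}$ is a power of $q$. Indeed, suppose $\psi^{G} = \eta$ with $\psi \in \Irr(H)$ (resp. $\IBr(H)$, resp. ${\rm I}_\pi(H)$) for some $H \leq G$. In each of the three settings induction multiplies degree by the index, so $\eta(1) = \psi^{G}(1) = |G:H|\,\psi(1)$; hence $\psi(1)$ divides $\eta(1) = q^{a}$ and is therefore a power of $q$. Now $\mathcal{D}$ is nonempty (it contains $\eta(1)$, taking $H = G$), and the gcd of a nonempty set of powers of $q$ equals its smallest member, so $\gcd(\mathcal{D}) = 1$ forces $1 \in \mathcal{D}$. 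Thus there is a subgroup $H \leq G$ and a character $\psi$ of degree $1$ with $\psi^{G} = \eta$. Since a degree-one character is linear and ``monomial'' means induced from a linear character, $\eta$ is monomial, as required.

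The argument carries no real obstacle; the only points needing care are setting-specific and routine. First, one must invoke the correct multiplicativity of degrees under induction: for ordinary characters this is immediate from Frobenius reciprocity, for Brauer characters it follows from the identification of Brauer-character induction with module induction, so that degrees scale by $|G:H|$ (see \cite{Navarro1998}), and for $\pi$-partial characters it is part of Isaacs' construction of induction in \cite{Isaacs1994}; the $\pi$-separability hypothesis in part (3) is exactly what makes that theory available. Second, one should note that in each setting ``linear'' means degree one, and that the relevant notion of monomiality is, by the definitions recalled earlier, induction from such a degree-one character, so the conclusion $1 \in \mathcal{D}$ says literally that $\eta$ is monomial.
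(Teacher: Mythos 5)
Your proof is correct and is essentially the paper's own argument: every character inducing $\eta$ has degree dividing $\eta(1) = q^a$, so all inducing degrees are $q$-powers, and a set of $q$-powers has gcd $1$ only if it contains $1$, yielding a linear inducing character. If anything your version is slightly cleaner, since the paper phrases the same argument in terms of the \emph{primitive} characters inducing $\chi$ or $\phi$ (a harmless but unnecessary restriction carried over from the preceding discussion of the Dade--Isaacs results), whereas you work directly with the defining set of all inducing degrees.
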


\begin{proof}
Since $\chi$ or $\phi$ is pseudo-monomial, we know that the greatest common divisor of the degrees of the primitive (Brauer, $\pi$-partial) characters inducing $\chi$ or $\phi$ is $1$.  However, all of the primitive (Brauer, $\pi$-partial) characters inducing $\chi$ or $\phi$ have $p$-power degree, so the only way that their greatest common divisor can be $1$ is if one of them has degree $1$.  This implies that $\chi$ or $\phi$ is induced by a linear (Brauer, $\pi$-partial) character.
\end{proof}

We close with showing that an $M_{\{ p\}}$-group must be an $M$-group.  Notice that this is a group where the $\{ p \}$-partial characters are monomial, not the $p$-Brauer characters.  The $p$-Brauer characters are actually the partial characters for the complement of $p$.  When discussing the group with the $p$-Brauer characters monomial, we denote as an $M_p$-group and this is the same as an $M_{\{p\}'}$-group.

\begin{theorem}
Let $p$ be an odd prime.  If $G$ is a pseudo $M_{\{ p \}}$-group, then $G$ is an $M$-group.
\end{theorem}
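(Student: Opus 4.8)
The plan is to prove directly that every $\chi \in \Irr(G)$ is monomial; this is exactly the assertion that $G$ is an $M$-group, and by Taketa's theorem it will in particular recover that $G$ is solvable, so no separate solvability argument is needed. Throughout I use that $G$ is $\{p\}$-separable, which is built into the very definition of ${\rm I}_{\{p\}}(G)$, so that the theory of $\{p\}$-special characters and the ${\rm B}_{\{p\}}$--${\rm I}_{\{p\}}$ correspondence (see \cite{solv text}) are available. I would argue by induction on $|G|$.

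The heart of the matter is the case of a $p$-special (that is, $\{p\}$-special) character $\alpha \in \Irr(G)$, whose degree is a power of $p$, and here the argument runs through the $\{p\}$-partial characters. Since $\alpha$ is $p$-special it lies in ${\rm B}_{\{p\}}(G)$ and its restriction $\alpha^o$ to the $p$-elements is an \emph{irreducible} $\{p\}$-partial character of the same ($p$-power) degree. Because $G$ is a pseudo $M_{\{p\}}$-group, $\alpha^o$ is pseudo-monomial, and since $\alpha^o(1)$ is a power of the single prime $p$, Lemma~\ref{one}(3) forces $\alpha^o$ to be monomial. At this point the oddness of $p$ enters decisively: by the rigidity established by Dade and Isaacs (the note at the end of \cite{dade} and Theorems 10.1 and 10.2 of \cite{nato}), a monomial $\{p\}$-partial character of $p$-power degree is in fact induced from a linear $\{p\}$-partial character, every primitive $\{p\}$-partial character inducing it being linear. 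Carrying the resulting linear $\{p\}$-partial character back through the ${\rm B}_{\{p\}}$--${\rm I}_{\{p\}}$ bijection produces a \emph{linear ordinary} character inducing $\alpha$, so $\alpha$ is monomial.

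To pass from $p$-special characters to an arbitrary $\chi \in \Irr(G)$, I would use Gajendragadkar's factorization together with Clifford theory. After replacing $G$ by the stabilizer of a suitable constituent over a chief factor and invoking the inductive hypothesis, one reduces to the situation in which $\chi$ factors as a product of a $p$-special character and a $p'$-special character; the $p$-special factor is monomial by the previous paragraph, while the $p'$-special factor, having degree prime to $p$, is controlled by induction applied to the relevant inertia subgroups. Reassembling these pieces yields monomiality of $\chi$.

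The principal obstacle lies in this last reduction rather than in the $p$-special case. The $\{p\}$-partial characters are blind to $O_{p'}(G)$, so the hypothesis constrains $G$ only through its action on $p$-elements; consequently the delicate point is to organize the Clifford reduction so that the inertia subgroups over which one induces still carry enough pseudo-monomiality and so that the $p'$-special factors are genuinely captured. This is exactly the step in which $p$ odd is indispensable—the transfer of monomiality from $\alpha^o$ back to $\alpha$ breaks down at $p=2$—and it is where the structural work of the proof is concentrated. Once the reduction is arranged so that every irreducible character is seen to be monomial, $G$ is an $M$-group.
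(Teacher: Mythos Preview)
Your treatment of a $p$-special character $\alpha$ is correct and is essentially the paper's argument run in the reverse direction: the paper takes an arbitrary $\phi\in{\rm I}_{\{p\}}(G)$, lifts every primitive $\{p\}$-partial character $\sigma$ inducing $\phi$ to its ${\rm B}_{\{p\}}$-lift $\tau$, observes that $\tau$ is then a primitive ordinary character inducing the ${\rm B}_{\{p\}}$-lift $\chi$ of $\phi$ and hence is $p$-special of $p$-power degree, and concludes from $\gcd=1$ that some $\sigma$ is linear. So on this part you and the paper agree.

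The gap is in your passage from $p$-special characters to arbitrary $\chi\in\Irr(G)$. Your induction on $|G|$ needs the inertia subgroups arising in the Clifford--Gajendragadkar reduction to inherit the pseudo $M_{\{p\}}$ hypothesis, and there is no mechanism for that: the hypothesis concerns only $\{p\}$-partial characters induced all the way up to $G$, and says nothing about partial characters of a proper subgroup. You correctly flag this as the ``principal obstacle'' but offer no way around it; the sentence ``the $p'$-special factor \dots\ is controlled by induction applied to the relevant inertia subgroups'' is precisely the step that does not go through.

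The paper sidesteps this entirely. It does \emph{not} try to show directly that every ordinary irreducible is monomial; it shows only that every $\phi\in{\rm I}_{\{p\}}(G)$ is monomial, i.e.\ that $G$ is an $M_{\{p\}}$-group. The remaining step---that for odd $p$ an $M_{\{p\}}$-group is an $M$-group---is a known theorem of Isaacs, alluded to in the paragraph preceding the statement (Theorems~10.1 and~10.2 of \cite{nato}). So the Clifford/Gajendragadkar reduction you are attempting is exactly the content of that cited result, and need not be reproved here.
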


\begin{proof}
We show that every irreducible $\{ p \}$-partial character is monomial.  Let $\phi$ be an irreducible $\{p\}$-partial character of $G$.  Let $\chi$ be the ${\rm B}_{\{p\}} (G)$ that lifts $\phi$.  We know that every primitive character that induces $\chi$ is $p$-special and so, they all have $p$-power degree.  Next, observe that if $\sigma$ is a primitive $\{p\}$-partial character that induces $\phi$ and $\tau$ is the ${\rm B}_{\{p\}}$-lift for $\sigma$, then $\tau$ is primitive and induces $\chi$.  This implies that every primitive partial character inducing $\phi$ has $p$-power degree.  Since their greatest common divisor is $1$, one of them must be one.  Therefore, $\phi$ is monomial.
\end{proof}

One final note, in \cite{Isaacs19842}, Isaacs notes that he does not have any examples of pseudo $M$-groups that are not $M$-groups.  As far as we know, there are still no known examples of pseudo $M$-groups that are not $M$-groups.  Continuing in this vein, we do not have any pseudo $M_p$-groups that are not $M_p$-groups, nor any pseudo $M_\pi$-groups that are not $M_\pi$-groups.  We would be very interested to find examples of such groups or to prove that they cannot exist.


\section*{Acknowledgments}

The first author thanks support of the program of Henan University of Technology (2024PYJH019),
the program of Foreign Experts of Henan Province (HNGD2024020),
and the Natural Science Foundation of Henan Province (252300421983).



\begin{thebibliography}{100}

\bibitem{super} X. Chen and J. Zheng, Super-Brauer characters and super-regular classes, {\it Monatsh Math.} {\bf 163} (2011), 15-23.

\bibitem{dade} E. C. Dade, Monomial characters and normal subgroups, {\it Math. Z.} {\bf 178} (1981), 401-420.

\bibitem{DIG}
K. Dastouri, A. Iranmanesh, and M. Ghasemi,
Monomial Isaacs $\pi$-partial characters and derived length,
{\it J. Algebra Appl.}, (2025) 2550108 (8 pages)

\bibitem{Gunter}
E. L. Gunter,
Pseudo-monomial characters and pseudo-M-groups,
Ph.D. Thesis, University of Wisconsin, 1987.

\bibitem{Isaacs1976}
I. M. Isaacs, {\it Character Theory of Finite Groups},
Academic Press, New York, 1976.

\bibitem{Isaacs1984}
I. M. Isaacs, Characters of $\pi$-separable groups,
{\it J. Algebra} {\bf 86} (1984), 98-128.


\bibitem{Isaacs19842}
I. M. Isaacs, Generalizations of Taketa's theorem on the solvability of $M$-groups,
{\it Proc. Amer. Math. Soc.} {\bf 91} (1984), 192-194.


\bibitem{Isaacs1994}
I. M. Isaacs, The $\pi$-character theory of solvable groups,
{\it J. Austral. Math. Soc. Ser. A} {\bf 57} (1994), 81-102.

\bibitem{nato} I.~M.~Isaacs, Characters and sets of primes for solvable groups in NATO Adv. Sci. Inst. Ser. C: Math. Phys. Sci., 471 Kluwer Academic Publishers Group, Dordrecht, 1995, 347--376.

\bibitem{solv text} 
I. M. Isaacs, {\it Characters of Solvable Groups}, American Mathematimathcal Society, Providence, R.I., 2018.


\bibitem{my pre}
M. L. Lewis, Kernels of Brauer characters and Isaacs' partial characters, preprint.

\bibitem{moreto}
A. Moret\'o, An answer to two questions of Brewster and Yeh on M-groups, {\it J. Algebra} {\bf 268} (2003), 366-370.

\bibitem{Navarro1998}
G. Navarro, {\it Characters and Blocks of Finite Groups},
Cambridge University Press, Cambridge, 1998.

\bibitem{Okuyama}
T. Okuyama, Module correspondence in finite groups,
{\it Hokkaido Math. J.} {\bf 10} (1981), 299-318.

\bibitem{vand} R. W. van der Waall, On the embedding of minimal non-M-groups, Nederl. Akad. Wetensch. Proc. Ser. A 77 {\it Indag. Math.} {\bf 36} (1974), 157-167.

\end{thebibliography}
\end{document}